\newtheorem{thm}{Theorem}
\newtheorem{prop}[thm]{Proposition}
\newcommand{\ie}{\textit{i.e.}\xspace}
\newcommand{\Z}{\ensuremath{\mathbb{Z}}\xspace}
\newcommand{\R}{\ensuremath{\mathbb{R}}\xspace}
\newcommand{\Sig}{\mathfrak{S}}
\renewcommand{\leq}{\leqslant}
\renewcommand{\geq}{\geqslant}
\renewcommand{\epsilon}{\varepsilon}
\renewcommand{\phi}{\varphi}
\renewcommand{\rho}{\varrho}
\newcommand{\Restriction}{\mathpunct{\mathpunct{\restriction}}}
\newcommand{\eval}[2][\right]{\relax\ifx#1\right\relax \left.\fi#2#1\rvert}
\newcommand{\Inf}{\operatorname{\bf Inf}}
\newcommand{\AS}{\operatorname{\bf AS}}
\newcommand{\I}{\AS}
\newcommand{\E}{\operatorname{\bf E}}
\newcommand{\Var}{\operatorname{\bf Var}}
\newcommand{\Prob}{\operatorname{\bf Prob}}
\begin{document}
\title{Average sensitivity of nested canalizing multivalued functions}
\author{Élisabeth Remy\inst{1} \and Paul Ruet\inst{2}}
\institute{Aix Marseille Univ, CNRS, I2M, Marseille, France \\
\email{elisabeth.remy@univ-amu.fr} 
\and
CNRS, Université Paris Cité, Paris, France \\
\email{ruet@irif.fr} 
}
\maketitle

\begin{abstract}
The canalizing properties of biological functions have been mainly studied in the context of Boolean modelling of gene regulatory networks. An important mathematical consequence of canalization is a low average sensitivity, which ensures in particular the expected robustness to noise. In certain situations, the Boolean description is too crude, and it may be necessary to consider functions involving more than two levels of expression. We investigate here the properties of nested canalization for these multivalued functions. We prove that the average sensitivity of nested canalizing multivalued functions is bounded above by a constant. In doing so, we introduce a generalization of nested canalizing multivalued functions, which we call weakly nested canalizing, for which this upper bound holds.
\keywords{nested canalizing functions, multivalued functions, average sensitivity, regulatory network modelling}
\end{abstract}

\section{Introduction}


The concept of canalization in biology was proposed by Waddington in the early 1940s \cite{Wad42}. It corresponds to the property of a biological process of being able to produce a relatively stable phenotype despite the presence of variability, as a kind of noise filter inherent in the process \cite{Pod21,DLR19}. This phenomenon is observed in natural systems, for example in \cite{Sur08} it has been shown that, at the molecular level, the development process of the Drosophilia embryo is canalized: the expression of genes controlling embryo segmentation is concentrated in a small area of state space.

Canalizing Boolean functions form a class of Boolean functions introduced by Kauffman \cite{Kau93,Kau03} that formalize this canalizing behaviour observed in gene regulatory networks. 
In short, letting $\Z/{2\Z}$ denote the $2$-element ring of integers modulo $2$, \emph{canalizing} Boolean functions are functions $f$ from $(\Z/{2\Z})^n$ to $\Z/{2\Z}$ (or possibly to $\R$) such that at least one input variable, say $x_i$ ($1\leq i\leq n$), has a value $a=0$ or $1$ which determines the value of $f(x)$. \emph{Nested canalizing} (NC) functions provide a ``recursive'' version of canalizing functions: an NC function $f$ is canalizing and, moreover, its restriction $f\Restriction_{x_i\neq a}$ is itself NC.


It is worth noting that NC Boolean functions are both rare among the set of Boolean functions and frequent among Boolean functions modeling gene networks. Indeed, on the one hand, they form a sparse set of Boolean functions \cite{Jus04}: the fraction of NC functions of arity $n$ among all Boolean functions of arity $n$ tends to $0$ as $n\rightarrow\infty$. On the other hand, \cite{Sub22} gives evidence that gene regulatory networks, which are built from biological data and knowledge from the literature \cite{Tho73}, are far from random: in particular, NC functions are predominant in Boolean gene networks. This canalizing property can be, and has been \cite{HJ12,Zho16}, used as a guide to filter through the large number of candidate functions to parameterise the regulatory graph, a critical point in the process of modelling networks.

On the mathematical side, a striking feature of NC functions is their low \emph{average sensitivity}. The average sensitivity $\AS(f)$ of a Boolean function $f$ is a measure of its ``complexity'' in the sense of Boolean functions analysis \cite{ODon14}: roughly speaking, for $f:(\Z/{2\Z})^n\rightarrow\Z/{2\Z}$, $\AS(f)$ measures how scattered the frontier between $0$'s and $1$'s is. For arbitrary Boolean functions, $\AS(f)=\mathcal{O}(n)$, but some functions have significantly lower average sensitivity. For NC functions, $\AS(f)$ is bounded above by a constant \cite{Lau13,Klo13}. This low average sensitivity has several consequences. Most importantly, it entails noise stability (noise in inputs is not amplified \cite{Sch08,ODon14}), a robustness property observed indeed in gene networks \cite{Kau93,Kau04}. Functions with low $\AS$ also depend on few coordinates \cite{Fri98}. And more theoretically, the Fourier-Walsh spectrum of functions with low $\AS$ is concentrated on low degrees, and the function can be more easily learned from examples \cite{ODon14}.

If in most cases, Boolean variables are sufficient to capture the main and key characteristics of the system under study, in some situations this description is too crude, and it may be necessary to consider more levels. To model such a situation correctly, multivalued variables have been introduced \cite{Tho91}. Then it is necessary to consider multivalued functions $f:(\Z/{k\Z})^n\rightarrow\Z/{k\Z}$ or $\R$ for some $k\geq 2$, where $\Z/{k\Z}$ denotes the ring of integers modulo $k$. The notion of average sensitivity generalizes to the multivalued setting \cite{ODon14}, and multivalued NC functions are defined in \cite{ML11,ML12}. Very little is known about their spectral properties. In \cite{KLKAL17}, a variant of average sensitivity, the normalized average $c$-sensitivity, is defined for multivalued functions, and used to measure the stability of networks based on NC functions.

A natural question is whether the average sensitivity of NC multivalued functions is bounded above by a constant, too. We prove in Theorem \ref{mainth} that this is the case. We actually show that the upper bound holds for a more general class of functions, which we call \emph{weakly nested canalizing} (WNC), and at the same time this enables us to establish the upper bound in a simpler way than in \cite{Lau13} for NC Boolean functions.

The paper is organized as follows. In Section \ref{secncf}, we recall the definition of nested canalizing multivalued functions from \cite{ML11,ML12} and illustrate it with example functions inspired from the logical modelling of the phage lambda. In Section \ref{sec:wnc}, we define weakly nested canalizing multivalued functions, provide examples of multivalued functions which are WNC but not NC, including a function arising from the phage lambda modelling. We also give an alternative characterization of WNC functions and prove that NC functions are indeed WNC. In Sections \ref{as} and \ref{upper}, we recall the definition of average sensitivity and prove Theorem \ref{mainth}. Section \ref{concl} concludes with possible perspectives for further research.

\section{Nested canalizing multivalued functions}

\label{secncf}

\subsection{Definition}
Let $k,n$ be positive integers, $k\geq 2$. $\Z/{k\Z}$ is the ring of integers modulo $k$.


Following \cite{ML11,ML12,KLKAL17}, we shall say that $f:(\Z/{k\Z})^n\rightarrow\Z/{k\Z}$ is \emph{canalizing with respect to coordinate $i$ and $(a,b)\in\Z/{k\Z}\times\Z/{k\Z}$} if there exists a function $g:(\Z/{k\Z})^n\rightarrow\Z/{k\Z}$ different from the constant $b$ such that
$$
f(x)=
\begin{cases}
b & \text{if } x_i=a \\
g(x) & \text{if } x_i\neq a.
\end{cases}
$$
We shall simply say that $f$ is \emph{canalizing} if it is canalizing with respect to some $i,a,b$.

A \emph{segment} is a (proper, nonempty) subset of $\Z/{k\Z}$ of the form $\{0,\ldots,i\}$ or $\{i,\ldots,k-1\}$, with $0\leq i\leq k-1$.

Let $\sigma\in\Sig_n$ be a permutation, $A_1,\ldots,A_n$ be segments, and $c_1,\ldots,$ $c_{n+1}\in\Z/{k\Z}$ be such that $c_n\neq c_{n+1}$. Then $f$ is said to be \emph{nested canalizing (NC) with respect to $\sigma$, $A_1,\ldots,A_n$, $c_1,\ldots,c_{n+1}$} if 
$$
f(x)=
\begin{cases}
c_1 & \text{if } x_{\sigma(1)}\in A_1 \\
c_2 & \text{if } x_{\sigma(1)}\notin A_1,x_{\sigma(2)}\in A_2 \\
\;\vdots & \quad\vdots \\
c_n & \text{if } x_{\sigma(1)}\notin A_1,\ldots,x_{\sigma(n-1)}\notin A_{n-1},x_{\sigma(n)}\in A_n \\
c_{n+1} & \text{if } x_{\sigma(1)}\notin A_1,\ldots,x_{\sigma(n-1)}\notin A_{n-1},x_{\sigma(n)}\notin A_n.
\end{cases}
$$
We shall simply say that $f$ is \emph{NC} if it is NC with respect to some $\sigma$, $A_1,\ldots,A_n$, $c_1,\ldots,c_{n+1}$.

\subsection{Example: logical modelling of the phage lambda}

\label{sec:phage}

The following example is inspired from the logical modelling of the phage lambda, a model system whose study revealed the basic concepts and mechanistic details of gene regulation. This regulator model that has been widely studied to understand the decision between lysis and lysogenization \cite{Pta92,RR08,Dan21} is described by NC functions.

The model involves two genes, CI and Cro. CI is either expressed or not, and its expression level is therefore modelled by a Boolean variable, while Cro can take 3 values $\{0,1,2\}$ \cite{TT95}. This simple model is sufficient to display both multistability (representing lysis and lysogeny fates) and oscillations (lysogeny state) \cite{RR08b,Rue16}.

In state $x=(x_{CI},x_{Cro})\in\Z/{2\Z}\times\Z/{3\Z}$, the next target value of CI is given by the following function $f_{CI}:\Z/{2\Z}\times\Z/{3\Z}\rightarrow\Z/{2\Z}$:
$$
f_{CI}(x) =
\begin{cases}
0 & \text{if } x_{Cro}\geq 1 \\
1 & \text{otherwise.}
\end{cases}
$$ 
For instance, in state $(1,2)$, the next value of CI can be $0$ because $f_{CI}(1,2)$ $=0$, and in state $(0,2)$, the value of CI cannot change because $f_{CI}(0,2)=0$.
Similarly, the target value of Cro is given by a function $f_{Cro}:\Z/{2\Z}\times\Z/{3\Z}\rightarrow\Z/{3\Z}$. 
For instance,
$$
f_{Cro}(x) =\begin{cases}
0 & \text{if } x_{CI}=1 \\
1 & \text{if } x_{CI}=0 \text{ and } x_{Cro}=2 \\
2 & \text{otherwise.}
\end{cases}
\qquad
\begin{array}{c|rc}
2&\;1&0 \\
1&2&0 \\
0&2&0 \\ \hline
\overset{\scriptstyle x_{Cro}\kern0.5em\mathstrut}%
    {\quad\scriptstyle x_{CI}\;\;\;\kern-0.5em\mathstrut}
&0&1
\end{array}
$$
In the above table to the right, the $x=(x_{CI},x_{Cro})$ entry is the value of $f_{Cro}(x)$. In state $(1,2)$, the target value of Cro is $0$ because $f_{Cro}(1,2)=0$, so the value of Cro can decrease.

In this context of discrete dynamics, to represent the trajectories of the dynamics of the whole system, we need to choose the update rule for the pair of functions $(f_{CI}, f_{Cro})$. We choose the \emph{asynchronous} setting, which means that at each time step, the level of at most one gene can change. So, in state $(1,2)$, since the levels of both genes can decrease ($f_{CI}(1,2)=0<1=x_{CI}$ and $f_{Cro}(1,2)=0<2=x_{Cro}$), the system can (non-deterministically) reach either state $(1,1)$ or state $(0,2)$. Note that there is no direct transition from $(1,2)$ to $(1,0)$ because we limit the length step to 1.
The set of asynchronous trajectories is summarized in the following state transition graph, where the vertices are the system states $(x_{CI},x_{Cro})$ and the arrows connect two consecutive states:

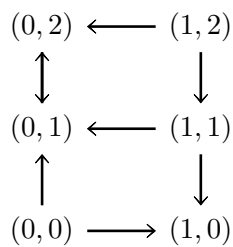
\begin{figure}
\normalsize
\centering
\tikzcdset{arrow style=tikz, arrows={line width=0.35mm},
    diagrams={>={Stealth[round,length=5pt,width=8pt,inset=4pt]}}}
\begin{tikzcd}
(0,2) \arrow{d}{} & (1,2) \arrow{d}{} \arrow{l}{} \\
(0,1) \arrow{u}{} & (1,1) \arrow{d}{} \arrow{l}{} \\
(0,0) \arrow{u}{} \arrow{r}{} & (1,0) 
\end{tikzcd}
\caption{\label{stg} State transition graph : asynchronous trajectories of the system $(f_{CI}, f_{Cro})$. It contains two attractors: the stable state $(1,0)$ and the cyclical attractor $\{ (0,1), (0,2)\}$ }
\end{figure}

The presence of two attractors (terminal strongly connected components, see Figure \ref{stg}) reflects multistability: the stable state $(1,0)$ representing lysis fate and the cyclical attractor lysogeny fate.

Clearly, $f_{CI}$ and $f_{Cro}$ are both NC. 
To see from the above definition of $f_{CI}$ that it is NC, it suffices to let the segment for $x_{Cro}$ be $\{1,2\}\subset\{0,1,2\}$.
It is again easy to see that $f_{Cro}$ is NC: the segment corresponding to $x_{CI}$ is $\{1\}\subset\{0,1\}$, and then the segment corresponding to $x_{Cro}$ is $\{2\}\subset\{0,1,2\}$.

\section{Weakly nested canalizing multivalued functions}

\label{sec:wnc}

In Theorem \ref{mainth}, we shall give an upper bound on average sensitivity which holds not only for NC functions, but for the more general class of weakly nested canalizing functions, which we define now.

\subsection{Definition}

Let $n$ be a positive integer. For each $i\in\{1,\ldots,n\}$, $\Omega_i$ is a finite set of cardinality $k_i>0$, $\Omega=\prod_i\Omega_i$, and $f:\Omega\rightarrow\R$.
Note that we do not require $k_i\geq 2$ for all $i$. If $k_j=1$ for some $j$, $f$ could be viewed as a function with one less variable, \ie as a function on $\prod_{i\neq j}\Omega_i$, but we still consider it as a function defined on $\prod_i\Omega_i$.

We shall say that $f$ is \emph{weakly canalizing with respect to coordinate $i$ and $(a,b)\in\Omega_i\times\R$} 
if $f(x)=b$ whenever $x_i=a$, and simply that it is \emph{weakly canalizing} if it is weakly canalizing with respect to some $i,a,b$.

Note that this definition differs slightly from the usual definition by the absence of condition on the values of $f$ for $x_i\neq a$: we do not require the existence of some $x$ such that $x_i\neq a$ and $f(x)\neq b$. In particular, constant functions are weakly canalizing, though not canalizing.

If $f$ is canalizing with respect to $i,a,b$ and $k_i\geq 2$, we shall consider
$$
f\Restriction_{x_i\neq a}:\Omega\cap\{x\mid x_i\neq a\}\rightarrow\R,
$$
the restriction of $f$ to the set of $x\in\Omega$ such that $x_i\neq a$.

The class of weakly nested canalizing functions on $\Omega=\prod_i\Omega_i$ is then defined by induction on the cardinality $|\Omega|=\prod_ik_i$ of $\Omega$:
\begin{itemize}
\item If $|\Omega|=1$, \ie $k_i=1$ for all $i$, any $f:\Omega\rightarrow\R$ is \emph{weakly nested canalizing (WNC) on $\Omega$}.
\item If $|\Omega|>1$, $f:\Omega\rightarrow\R$ is \emph{WNC on $\Omega$} if it is weakly canalizing with respect to some $i,a,b$ such that $k_i\geq 2$ and $f\Restriction_{x_i\neq a}$ is WNC on $\Omega\cap\{x\mid x_i\neq a\}$, a strict subset of $\Omega$.
\end{itemize}

\subsection{Examples of WNC non NC functions}

As we shall see in Proposition \ref{ncwnc}, the class of WNC functions contains the class of NC functions. We give here a few examples of simple multivalued functions which are WNC but not NC.

\begin{itemize}
\item As we have already observed, constant functions from $(\Z/{k\Z})^n$ to $\Z/{k\Z}$ are WNC but not NC.
\item In decomposing a WNC function $f:(\Z/{k\Z})^n\rightarrow\Z/{k\Z}$, it is possible to ``peel'' a coordinate hyperplane defined on some coordinate $i$ (\ie by some equation $x_i=a$), then a coordinate hyperplane defined on $j$, and later a coordinate hyperplane defined on $i$ again. This is because of the recursive nature of the definition of WNC functions, and gives more freedom in the construction of WNC functions than in the construction of NC functions.

For instance, the functions $\min$ and $\max:(\Z/{k\Z})^2\rightarrow\Z/{k\Z}$ are not NC, as observed in \cite{KLKAL17}. However, an easy induction on $k$ shows that they are WNC. For instance, $\min=\min_k:\{0,\ldots,k-1\}^2\rightarrow\{0,\ldots,k-1\}$ is weakly canalizing with respect to $1,0,0$, $\min_k\Restriction_{x_1\neq 0}$ is weakly canalizing with respect to $2,0,0$, and $\min_k\Restriction_{x_1\neq 0,x_2\neq 0}$ is identical to the function $\min_{k-1}:\{1,\ldots,k-1\}^2\rightarrow\{1,\ldots,k-1\}$, which is WNC.
\item Also, in constructing a WNC function $f:(\Z/{k\Z})^n\rightarrow\Z/{k\Z}$, the values $a$ used to define $f(x)$ for $x_i=a$ need not be extremal values (initially $0$ or $k-1$), they can be intermediate values: $0<a<k-1$.

For instance, the function from $\Z/{3\Z}$ to $\Z/{3\Z}$ defined by $0\mapsto 0, 1\mapsto 1, 2\mapsto 0$ is not NC because it is canalizing with respect to either the intermediate value $1$ (for its unique variable), or the values $0$ and $2$ (which do not form a segment). But any function from $\Z/{k\Z}$ to $\Z/{k\Z}$ is WNC.
\end{itemize}

\subsection{Back to the phage lambda example }

\label{sec:phage2}

Getting back to the model of the phage lambda presented in Section \ref{sec:phage}, since $f_{CI}$ and $f_{Cro}$ are NC, they are also WNC by Proposition \ref{ncwnc} below.

Let us consider the following function:
$$
f'_{Cro}(x) =
\begin{cases}
1 & \text{if } x_{Cro}=2 \\
0 & \text{if } x_{Cro}\neq 2 \text{ and } x_{CI}=1 \\
2 & \text{if } x_{Cro}\neq 2, x_{CI}\neq 1 \text{ and } x_{Cro}=1 \\
1 & \text{otherwise}
\end{cases}
\hspace*{8mm}
\begin{array}{c|rc}
2&\;1&1 \\
1&2&0 \\
0&1&0 \\ \hline
\overset{\scriptstyle x_{Cro}\kern0.5em\mathstrut}%
    {\quad\scriptstyle x_{CI}\;\;\;\kern-0.5em\mathstrut}
&0&1
\end{array}
$$
In the above table to the right, the $x=(x_{CI},x_{Cro})$ entry is the value of $f'_{Cro}(x)$. By the same argument as developed in Section \ref{sec:phage}, we can see that $(f_{CI}, f'_{Cro})$ gives rise to the same state transition graph as $(f_{CI}, f_{Cro})$ (represented in Figure \ref{stg}).  

It is interesting to remark that $f_{Cro}$ and $f'_{Cro}$ do not have the same canalizing property: $f_{Cro}$ is NC, while $f'_{Cro}$ is clearly WNC but not NC: indeed, the table shows that the first canalizing step has to be $x_{Cro}=2$ (unique choice which fixes the value of $f'_{Cro}$), the resulting restriction $f'_{Cro}\restriction_{x_{Cro}\neq 2}$ is canalizing only with $x_{CI}=1$, and the restriction $f'_{Cro}\restriction_{x_{Cro}\neq 2,x_{CI}\neq 1}$ is not constant. Thus, in this example two functions that represent the same asynchronous dynamics do not have the same canalizing properties.

\subsection{Properties of WNC functions}

Intuitively, a function $f:\Omega\rightarrow\R$ is WNC if its domain $\Omega$ can be ``peeled'' by successively removing coordinate hyperplanes (defined by equations of the form $x_i=a$) whose points are mapped by $f$ to the same value, whence the following characterization:

\begin{prop}[Characterization]
\label{prop:charac}
Letting $K=\sum_ik_i$, $f$ is WNC if and only if there exist a function $v:\{1,\ldots,K\}\rightarrow\{1,\ldots,n\}$ and numbers $a_i\in\Omega_{v(i)}$ and $b_i\in\R$ for each $i\in\{1,\ldots,K\}$ such that:
$$
f(x)=
\begin{cases}
b_1 & \text{if } x_{v(1)}=a_1 \\
b_2 & \text{if } x_{v(1)}\neq a_1,x_{v(2)}=a_2 \\
\;\vdots & \quad\vdots \\
b_K & \text{if } x_{v(1)}\neq a_1,\ldots,x_{v(K-1)}\neq a_{K-1},x_{v(K)}=a_K. \\
\end{cases}
$$
\end{prop}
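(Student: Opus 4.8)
The plan is to prove both implications by induction on the cardinality $|\Omega|=\prod_i k_i$, exploiting the fact that the inductive definition of WNC and the cascade in the statement are both organized as a ``peel off one hyperplane, then recurse'' scheme. Throughout, I call a clause of a cascade \emph{dead} if the conjunction of its guard $x_{v(1)}\neq a_1,\dots,x_{v(t-1)}\neq a_{t-1}$ with the equation $x_{v(t)}=a_t$ is unsatisfiable, so that the clause never fires: dead clauses do not affect the computed function, and their triples $(v(t),a_t,b_t)$ may be chosen freely among admissible ones (those with $a_t\in\Omega_{v(t)}$).

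For the ``only if'' direction I would unfold the definition of WNC. If $|\Omega|=1$ then $K=n$, the function is a single value $b$, and I take the first clause $(v(1),a_1,b_1)=(1,a,b)$ with $a$ the unique element of $\Omega_1$; since $k_1=1$ this guard is always true, so $f\equiv b$, and the remaining $n-1$ clauses are dead and filled with arbitrary admissible triples, giving exactly $K$ clauses. If $|\Omega|>1$, weak canalization supplies $(i,a,b)$ with $k_i\geq 2$ and $f\Restriction_{x_i\neq a}$ WNC on $\Omega'=\Omega\cap\{x_i\neq a\}$; since $\sum_j k_j'=K-1$, the induction hypothesis gives a cascade of length $K-1$ for $f\Restriction_{x_i\neq a}$, and prepending the clause $(i,a,b)$ yields a cascade of length $K$ for $f$. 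Indeed on $\{x_i=a\}$ the first clause returns $b$, while on $\Omega'$ the prefix $x_i\neq a$ holds identically so the shifted sub-cascade computes $f\Restriction_{x_i\neq a}$; admissibility and exhaustiveness are inherited.

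For the ``if'' direction I would read the cascade as a peeling, after recording the easy fact---immediate by induction on $|\Omega|$---that constant functions are WNC. Given a cascade for $f$ with $|\Omega|>1$: if $k_{v(1)}=1$ the first guard is identically true, so $f\equiv b_1$ is constant and hence WNC; otherwise $k_{v(1)}\geq 2$ and the first clause exhibits $f$ as weakly canalizing with respect to $(v(1),a_1,b_1)$. It then remains to produce an admissible, exhaustive cascade on $\Omega'=\Omega\cap\{x_{v(1)}\neq a_1\}$ for $f\Restriction_{x_{v(1)}\neq a_1}$ and to invoke the induction hypothesis. I take clauses $2,\dots,K$, delete every clause with $(v(t),a_t)=(v(1),a_1)$ (these are exactly those dead on $\Omega'$, since their guard forces both $x_{v(1)}\neq a_1$ and $x_{v(1)}=a_1$), and keep the rest.

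The verification of this trimmed sub-cascade is where the real work lies, and is the step I expect to be the genuine obstacle. Three points must be checked. First, every retained triple is admissible over $\Omega'$: if $v(t)\neq v(1)$ then $\Omega_{v(t)}'=\Omega_{v(t)}$, while if $v(t)=v(1)$ then $a_t\neq a_1$ places $a_t\in\Omega_{v(1)}\setminus\{a_1\}=\Omega_{v(1)}'$. Second, deleting the dead clauses does not change the computed function, because the only guard literal thereby omitted is $x_{v(1)}\neq a_1$, which holds identically on $\Omega'$. Third, the trimmed cascade is exhaustive on $\Omega'$: any $x\in\Omega'$ satisfies $x_{v(1)}\neq a_1$, hence fails clause $1$, so the first original clause it satisfies has index $\geq 2$ and cannot be dead, \ie is retained. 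With these points the trimmed cascade represents $f\Restriction_{x_{v(1)}\neq a_1}$ on $\Omega'$, the induction hypothesis applies, and $f$ is WNC. The argument in fact works verbatim for a cascade of any length, the length-$K$ hypothesis being needed only to match the ``only if'' construction.
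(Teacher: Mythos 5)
Your proof is correct: the paper states Proposition \ref{prop:charac} without any proof, presenting it as immediate from the ``peeling'' intuition, and your induction on $|\Omega|$ for each direction is a faithful formalization of exactly that idea, with the one genuinely delicate point (trimming the clauses with $(v(t),a_t)=(v(1),a_1)$ so that the sub-cascade is admissible on $\Omega'$ while exhaustiveness and the computed values are preserved) handled correctly. The only nitpick is that the deleted clauses need not be \emph{exactly} the clauses dead on $\Omega'$ (a later clause can be dead for other reasons), but your argument only uses the inclusion you actually establish, namely that every deleted clause is dead.
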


\begin{proof}
By induction on $K$.

The base case corresponds to $|\Omega|=1$. Then for all $i$, $\Omega_i$ is a singleton $\{a_i\}$, and $K=n$. In that case, a function $f:\Omega\rightarrow\R$ is simply a number $b\in\R$ and can be defined for instance by
$$
f(x)=
\begin{cases}
b & \text{if } x_{1}=a_1 \\
b_2 & \text{if } x_{1}\neq a_1,x_{2}=a_2 \\
\;\vdots & \quad\vdots \\
b_n & \text{if } x_{1}\neq a_1,\ldots,x_{n-1}\neq a_{n-1},x_{n}=a_n, \\
\end{cases}
$$
where $b_2,\ldots,b_n$ are arbitrary numbers because the conditions of lines $2$ to $n$ are obviously not matched.

We now assume that $|\Omega|>1$ and that $f:\Omega\rightarrow\R$ is WNC on $\Omega$: this means that $f$ is weakly canalizing with respect to some $i,a,b$ such that $k_i\geq 2$ and that $f'=f\Restriction_{x_i\neq a}$ is WNC on $\Omega'=\Omega\cap\{x\mid x_i\neq a\}$. Therefore
$$
f(x)=
\begin{cases}
b & \text{if } x_{i}=a \\
f'(x) & \text{if } x_i\neq a.
\end{cases}
$$
Moreover, for $f':\Omega'\rightarrow\R$, we have $K'=K-1$, hence by the induction hypothesis, there exist a function $v:\{2,\ldots,K\}\rightarrow\{1,\ldots,n\}$ and numbers $a_i\in\Omega_{v(i)}$ and $b_i\in\R$ indexed by $i\in\{2,\ldots,K\}$ (a set of cardinality $K'$) such that
$$
f'(x)=
\begin{cases}
b_2 & \text{if } x_{v(2)}=a_2 \\
b_3 & \text{if } x_{v(2)}\neq a_2,x_{v(3)}=a_3 \\
\;\vdots & \quad\vdots \\
b_{K} & \text{if } x_{v(2)}\neq a_2,\ldots,x_{v(K-1)}\neq a_{K-1},x_{v(K)}=a_{K}. \\
\end{cases}
$$
This entails the following expression for $f$:
$$
f(x)=
\begin{cases}
b & \text{if } x_{i}=a \\
b_2 & \text{if } x_i\neq a,x_{v(2)}=a_2 \\
b_3 & \text{if } x_i\neq a,x_{v(2)}\neq a_2,x_{v(3)}=a_3 \\
\;\vdots & \quad\vdots \\
b_{K} & \text{if } x_i\neq a,x_{v(2)}\neq a_2,\ldots,x_{v(K-1)}\neq a_{K-1},x_{v(K)}=a_{K},
\\
\end{cases}
$$
which is of the expected form by letting $a_1=a,b_1=b$ and extending $v$ to $v:\{1,2,\ldots,K\}\rightarrow\{1,\ldots,n\}$ with $v(1)=i$.
\qed
\end{proof}

In decomposing an NC function $f:(\Z/{k\Z})^n\rightarrow\Z/{k\Z}$, each coordinate $i\in\{1,\ldots,n\}$ is considered exactly once (in some order prescribed by a permutation $\sigma$) and the value of $f$ is fixed for $x_{\sigma(i)}$ in some segment $A_i$. This can be realized by successively fixing the value of $f$ for each $\alpha\in A_i$, and therefore, the class of WNC functions contains the class of NC functions, as stated in the following Proposition:

\begin{prop}[NC $\Rightarrow$ WNC]
\label{ncwnc}
If $f:(\Z/{k\Z})^n\rightarrow\Z/{k\Z}$ is NC, then it is WNC.
\end{prop}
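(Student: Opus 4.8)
The plan is to invoke the characterization of Proposition~\ref{prop:charac}: since every $\Omega_i=\Z/k\Z$ we have $K=\sum_ik_i=nk$, and it suffices to produce a map $v\colon\{1,\ldots,K\}\to\{1,\ldots,n\}$ together with elements $a_j\in\Z/k\Z$ and values $b_j\in\R$ for which the given NC function $f$ acquires the nested point-test form displayed in that proposition. The guiding idea, already announced in the text preceding the statement, is to replace each single segment-test ``$x_{\sigma(i)}\in A_i$'' appearing in the definition of NC by the finite list of point-tests ``$x_{\sigma(i)}=\alpha$'' as $\alpha$ ranges over $A_i$, and then to express the terminal value $c_{n+1}$ in the same point-test language.

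Concretely, I would assemble the sequence of triples $(v(j),a_j,b_j)$ in two passes. In the first pass, for $i=1,\ldots,n$ in turn, I enumerate the elements of the segment $A_i$ and append, for each $\alpha\in A_i$, the triple $(\sigma(i),\alpha,c_i)$. In the second pass, again for $i=1,\ldots,n$ in turn, I enumerate the elements of the complement $\Omega_{\sigma(i)}\setminus A_i$ and append, for each $\beta$, the triple $(\sigma(i),\beta,c_{n+1})$. Because $|A_i|+|\Omega_{\sigma(i)}\setminus A_i|=k$ for every $i$, the total number of triples is exactly $\sum_ik=nk=K$, so the sequence has precisely the length required by Proposition~\ref{prop:charac}.

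The heart of the argument is the verification that this sequence computes $f$, which reduces to tracking how the accumulated negated premises line up with those of the NC display. The key bookkeeping fact is that once the blocks attached to $A_1,\ldots,A_{i-1}$ have been exhausted, the conjunction ``all earlier tests failed'' is exactly $x_{\sigma(1)}\notin A_1,\ldots,x_{\sigma(i-1)}\notin A_{i-1}$; hence a test in the $A_i$-block fires precisely when in addition $x_{\sigma(i)}\in A_i$, which reproduces the $i$-th line of the NC definition with value $c_i$. After all $n$ first-pass blocks, ``all earlier tests failed'' becomes exactly $\bigwedge_i x_{\sigma(i)}\notin A_i$, the premise of the final NC line. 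I would then observe that the first second-pass block, with $\beta$ ranging over $\Omega_{\sigma(1)}\setminus A_1$, already exhausts this region with value $c_{n+1}$: any point satisfying $x_{\sigma(i)}\notin A_i$ for all $i$ has $x_{\sigma(1)}$ equal to some such $\beta$, and is caught the moment its own $\beta$ is tested. Once this block is finished, the accumulated negation forces $x_{\sigma(1)}\in A_1$ on top of $x_{\sigma(1)}\notin A_1$, which is unsatisfiable, so every remaining second-pass test is vacuous and the values placed there are immaterial.

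I expect the only genuine difficulty to be precisely this bookkeeping of nested negations: one must check that no point is assigned a wrong value by a complement-test fired too early, and that the constant region is covered in full yet exclusively with $c_{n+1}$. A routine point to clear up in passing is that the codomain $\Z/k\Z$ of an NC function is to be read inside $\R$ via its representatives $0,\ldots,k-1$, so that the constants $c_1,\ldots,c_{n+1}$ are legitimate choices of values $b_j\in\R$; this identification is harmless and leaves the case structure untouched.
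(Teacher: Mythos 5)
Your proposal is correct and follows essentially the same route as the paper's proof: both replace each segment test $x_{\sigma(i)}\in A_i$ by its list of point tests in a first pass carrying the value $c_i$, then append a second pass over the complements carrying $c_{n+1}$, and conclude via Proposition~\ref{prop:charac}. Your explicit check that the first complement block already exhausts the region $x_{\sigma(1)}\notin A_1,\ldots,x_{\sigma(n)}\notin A_n$, making the remaining tests vacuous, is a bookkeeping detail the paper leaves implicit.
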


\begin{proof}
Assume $f$ is NC with respect to $\sigma$, $A_1,\ldots,A_n$, $c_1,\ldots,c_{n+1}$. For each $i\in\{1,\ldots,n\}$, let
\begin{align*}
A_i &= \{\alpha_i^1,\ldots,\alpha_i^{|A_i|}\} \\
(\Z/{k\Z})\setminus A_i &= \{\alpha_i^{1+|A_i|},\ldots,\alpha_i^k\}
\end{align*}
with $\alpha_i^1<\cdots<\alpha_i^{|A_i|}$ and $\alpha_i^{1+|A_i|}<\cdots<\alpha_i^k$. This defines $K=nk$ numbers $\alpha_i^j\in\Z/{k\Z}$. For each $i\in\{1,\ldots,n\}$ and $j\in\{1,\ldots,k\}$, let
$$
\beta_i^j=
\begin{cases}
c_i & \text{if } j\leq |A_i| \\
c_{n+1} & \text{otherwise.}
\end{cases}
$$
To comply with the characterization of WNC functions (Proposition \ref{prop:charac}), we relabel the numbers $\alpha_i^j,\beta_i^j$ by identifying the list
$$
\alpha_1^1,\ldots,\alpha_1^{|A_1|},\ldots,\alpha_n^1,\ldots,\alpha_n^{|A_n|},\alpha_1^{1+|A_1|},\ldots,\alpha_1^k,\ldots,\alpha_n^{1+|A_n|},\ldots,\alpha_n^k
$$
as the list $a_1,\ldots,a_K$, and by identifying similarly the list
$$
\beta_1^1,\ldots,\beta_1^{|A_1|},\ldots,\beta_n^1,\ldots,\beta_n^{|A_n|},\beta_1^{1+|A_1|},\ldots,\beta_1^k,\ldots,\beta_n^{1+|A_n|},\ldots,\beta_n^k
$$
as the list $b_1,\ldots,b_K$. Call $\phi$ this relabelling, which maps $r\in\{1,\ldots,K\}$ to the pair $\phi(r)=(i,j)$ such that $a_r=\alpha_i^j$ and $b_r=\beta_i^j$. For instance, $\phi(1)=(1,1)$ and $\phi(K)=(n,k)$. Then finally, a function $v:\{1,\ldots,K\}\rightarrow\{1,\ldots,n\}$ is defined by $v(r)=\sigma(i)$ if $\phi(r)=(i,j)$. Then $f$ clearly enjoys
the characterization of WNC functions, with the choice of function $v$ and numbers $a_r, b_r$.
\qed
\end{proof}

\section{Average sensitivity}

\label{as}

In Section \ref{upper} we shall be interested in the average sensitivity of (WNC) multivalued functions, but before giving the (more technical) definition of average sensitivity for multivalued functions, we start by recalling the more intuitive definition for Boolean functions.

\subsection{Boolean-valued Boolean functions}

The average sensitivity of a Boolean function $f:(\Z/2\Z)^n\rightarrow\Z/2\Z$ is the probability that the $i$th variable affects the outcome. It can be defined as follows.
For $x\in(\Z/2\Z)^n$, let $\overline{x}^i$ denote the vector obtained from $x$ by changing the value of $x_i$. Then the \emph{influence of the $i$th variable} is
$$
\Inf_i[f]=\Prob_x[f(x) \neq f(\overline{x}^i)]\in[0,1],
$$
where the probability is taken for the uniform distribution, and the \emph{average sensitivity} (also called \emph{influence} or \emph{total influence}) is
$$
\I[f]=\sum_i\Inf_i[f]\in[0,n].
$$

This can be reformulated in terms of boundary edges. Let an \emph{edge} in the Hamming cube $(\Z/2\Z)^n$ be a pair $(x,y)$ such that the Hamming distance between $x$ and $y$ is $1$, and let a \emph{boundary edge} be an edge $(x,y)$ such that $f(x)\neq f(y)$. Then the average sensitivity $\I[f]$ is such that the fraction of edges which are boundary edges equals $\I[f]/n$.

\subsection{Multivalued functions}

This definition can be generalized to multivalued functions. Following \cite[Chapter~8]{ODon14}, we shall take the following definition.

First, Fourier decomposition is generalized to non Boolean domains. Let $\Omega=\prod_{i=1}^n\Omega_i$ be as above, with $|\Omega_i|=k_i$. On the vector space of real-valued functions defined on $\Omega$, an inner product is given by $\langle f,g\rangle=\E_x[f(x)g(x)]$, where $\E$ denotes the expectation. Here, $x\in\Omega$ and we assume independent uniform probability distributions on the $\Omega_i$. A \emph{Fourier basis} is an orthonormal basis $(\phi_\alpha)_{\alpha\in\prod_i\{0,\ldots,k_i\}}$ such that $\phi_{(0,\ldots,0)}=1$. It is not difficult to see that a Fourier basis always exists, although it is not unique.

Then, fix a Fourier basis $(\phi_\alpha)$. The \emph{Fourier coefficients of $f:\Omega\rightarrow\R$} are $\widehat{f}(\alpha)=\langle f,\phi_\alpha\rangle$, and $E_if=\sum_{\alpha|\alpha_i=0}\widehat{f}(\alpha)\phi_\alpha$ turns out to be independent of the basis. The notation $\sum_{\alpha|\alpha_i=0}$ denotes the sum for all $\alpha$ such that $\alpha_i=0$. For all $i\in\{1,\ldots,n\}$, let the \emph{$i$th coordinate Laplacian operator} $L_i$ be the linear operator defined by $L_if=f-E_if$.

Finally, the \emph{influence of coordinate $i$ on $f$} is defined by $\Inf_i[f]=\langle f,L_if\rangle$, and the \emph{average sensitivity} of $f$ is then $\I[f]=\sum_i\Inf_i[f]$.

By Plancherel’s theorem (see \cite{ODon14}), we have
$$
\Inf_i[f]=\sum_{\alpha_i\neq 0}\widehat{f}(\alpha)^2=\E_x[\Var_{y_i}[f(x_1,\ldots,x_{i-1},y_i,x_{i+1},\ldots,x_n)]],
$$
where $\Var$ denotes the variance ($\Var[g]=\E[g^2]-\E[g]^2$) and $y_i\in\Omega_i$. The above equality makes clear that the definition of influence of multivalued functions generalizes the Boolean case.


\section{Upper bound on the average sensitivity of WNC multivalued functions}

\label{upper}

For an arbitrary $f:\Omega\rightarrow[0,M]$, we have $\Var_i[f](x)\leq (M/2)^2$ for all $i$, therefore $\Inf_i[f]\leq M^2/4$ for all $i$ and
$$
\I[f]\leq n\cdot M^2/4=\mathcal{O}(n).
$$
For WNC functions, this upper bound can be greatly improved. In the Boolean case, \cite{Lau13} proves (by a different method from ours) that $\I[f]\leq 2$ for NC $\{-1,+1\}$-valued functions. This bound is improved in \cite{Klo13}, where it is proved that $\I[f]\leq 4/3$. For NC functions $f:\{0,1\}^n\rightarrow\{0,1\}$, the result in \cite{Lau13} means $\I[f]\leq 1/2$.

Theorem \ref{mainth} generalizes this result, by establishing that, in the more general multivalued case, the average sensitivity of WNC functions is bounded above by a constant.

\begin{thm}
\label{mainth}
Let $\Omega=\prod_{i=1}^n\Omega_i$ where each $\Omega_i$ has cardinality $k_i>0$. Let $f:\Omega\rightarrow[0,M]$ and $\kappa=\max_i(k_i-1)/k_i<1$. If $f$ is WNC (in particular if it is NC), then
$$
\I[f]\leq\frac{M^2}{4(1-\kappa)}.
$$
\end{thm}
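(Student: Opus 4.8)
The plan is to prove the bound by strong induction on the cardinality $|\Omega|=\prod_i k_i$, exploiting the recursive definition of WNC functions together with the variance formulation of influence, $\Inf_j[f]=\E_x[\Var_{y_j}[f(x_1,\ldots,y_j,\ldots,x_n)]]$. The base case $|\Omega|=1$ is immediate: $f$ is constant, every $\Inf_j[f]$ vanishes, and the bound holds. For the inductive step, suppose $|\Omega|>1$; by definition of WNC, $f$ is weakly canalizing with respect to some $i,a,b$ with $k_i\geq 2$, so that $f(x)=b$ whenever $x_i=a$ and $g:=f\Restriction_{x_i\neq a}$ is WNC on $\Omega'=\Omega\cap\{x\mid x_i\neq a\}$, a strict subset of $\Omega$.

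The first step is to bound separately the influence of the peeled coordinate $i$ and those of the remaining coordinates. For coordinate $i$, since $\Var_{y_i}[f]$ is the variance of a random variable valued in $[0,M]$, it is at most $M^2/4$ pointwise (as already noted in the text), hence $\Inf_i[f]\leq M^2/4$. For a coordinate $j\neq i$, I would condition on the value of $x_i$ inside $\Inf_j[f]=\E_x[\Var_{y_j}[f]]$: because $j\neq i$, resampling $y_j$ leaves $x_i$ unchanged, so on the event $x_i=a$ (probability $1/k_i$) the function stays constant equal to $b$ and the inner variance is $0$, while on $x_i\neq a$ (probability $(k_i-1)/k_i$) we have $f=g$. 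Since conditioning the uniform law on $\Omega$ on the event $\{x_i\neq a\}$ yields exactly the uniform law on $\Omega'$, this conditional expectation equals $\Inf_j[g]$ computed on $\Omega'$, giving $\Inf_j[f]=\frac{k_i-1}{k_i}\Inf_j[g]\leq\kappa\,\Inf_j[g]$.

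Summing over all coordinates and using nonnegativity of influences ($\sum_{j\neq i}\Inf_j[g]\leq\I[g]$) produces the recursion $\I[f]\leq M^2/4+\kappa\,\I[g]$. It then remains to apply the induction hypothesis to $g$ on $\Omega'$. Here one must check that passing to $\Omega'$ does not enlarge the constant: the cardinalities of $\Omega'$ are the $k_l$ for $l\neq i$ together with $k_i-1$ for $l=i$, and since $t\mapsto(t-1)/t$ is increasing the associated $\kappa'$ satisfies $\kappa'\leq\kappa$. The induction hypothesis thus gives $\I[g]\leq M^2/(4(1-\kappa'))\leq M^2/(4(1-\kappa))$, and substituting into the recursion the terms telescope:
$$
\I[f]\leq\frac{M^2}{4}+\kappa\cdot\frac{M^2}{4(1-\kappa)}=\frac{M^2}{4}\cdot\frac{1}{1-\kappa}=\frac{M^2}{4(1-\kappa)},
$$
which closes the induction.

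All the computations are elementary; the two points demanding care are the identification of the conditional expectation on $\{x_i\neq a\}$ with the influence $\Inf_j[g]$ over the reduced domain $\Omega'$ (which is what lets the recursion speak about the same quantity at a smaller scale), and the monotonicity $\kappa'\leq\kappa$, which ensures the induction closes with a single uniform constant rather than a constant that drifts at each peeling step. I expect the conditioning identity to be the main place where one should write things out carefully, since it is where the WNC structure is actually used.
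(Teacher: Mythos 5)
Your proposal is correct and follows essentially the same route as the paper's proof: both peel the canalizing coordinate, bound its influence by $M^2/4$ via the variance of a $[0,M]$-valued variable, show the remaining influences equal $\frac{k_i-1}{k_i}\leq\kappa$ times the corresponding influences of the restriction, and close the induction with the recursion $\I[f]\leq M^2/4+\kappa\,\I[f']$. The only cosmetic differences are that you induct on $\prod_i k_i$ rather than $\sum_i k_i$ and phrase the key identity as a conditioning argument instead of an explicit sum over $\Omega$; both are valid and the checks you flag ($\kappa'\leq\kappa$, identification of the conditional law with the uniform law on $\Omega'$) are exactly the ones the paper performs.
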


\begin{proof}
We prove this by induction on $\sum_ik_i$. If $\sum_ik_i=n$, \ie $k_i=1$ for all $i$, the inequality holds trivially: actually $\I[f]=0$.

Now assume $\sum_ik_i>n$ and $f$ is WNC. This means that $f$ is weakly canalizing with respect to some $j,a,b$ such that $k_j\geq 2$, and we let $f'=f\Restriction_{x_j\neq a}$. Let $\Omega'$ be the set of $x\in\Omega$ such that $x_j\neq a$, so that $f':\Omega'\rightarrow[0,M]$. The induction hypothesis applied to $f'$ reads
$$
\I[f']\leq\frac{M'^2}{4(1-\kappa')}
$$
with
\begin{align*}
M' & = \max_{x\in\Omega'}f'(x) = \max_{x\in\Omega'}f(x) \leq M \\
\kappa' & = \max\left\{\frac{k_j-2}{k_j-1}, \max_{i\neq j}\frac{k_i-1}{k_i}\right\} \leq \kappa.
\end{align*}
Note that the induction hypothesis implies $\I[f']\leq M^2/(4(1-\kappa))$. We shall use the notation
$$
\Var_i[f](x)=\Var_{y_i}[f(x_1,\ldots,x_{i-1},y_i,x_{i+1},\ldots,x_n)].
$$
Then $\I[f]=\E_x[\sum_i\Var_i[f](x)]$ and
\begin{align*}
\I[f] \cdot \prod_ik_i & = \sum_x\sum_i \Var_i[f](x) \\
& = \sum_{x_j=a} \Var_j[f](x) +
\sum_{x_j\neq a} \biggl(\Var_j[f](x) + \sum_{i\neq j} \Var_i[f](x)\biggr)
\end{align*}
since $f(x)$ is constant when $x_j=a$, so that $\Var_i[f](x)=0$ for $i\neq j$. Here and below, the notations $\sum_{x_j=a}$ and $\sum_{x_j\neq a}$ denote the sums for all $x$ such that $x_j=a$ (resp. $x_j\neq a$). Fur\-ther\-more, $\Var_j[f](x)$ is independent of $x_j$, and on the other hand, $\Var_i[f](x)$ $=\Var_i[f'](x)$ when $x_j\neq a$ and $i\neq j$. Thus
\begin{equation*}
\I[f] \cdot \prod_ik_i = k_j \cdot \sum_{x_j=a} \Var_j[f](x) +
\sum_{x_j\neq a} \sum_{i\neq j} \Var_i[f'](x).
\end{equation*}
Since $0\leq f(x)\leq M$ for all $x$, we have $\Var_j[f](x)\leq M^2/4$, and on the other hand, $x_j\neq a \Leftrightarrow x\in\Omega'$. Therefore
\begin{align*}
\I[f] \cdot \prod_ik_i & \leq k_j \cdot \prod_{i\neq j}k_i \cdot M^2/4 +
\sum_{x\in\Omega'} \sum_{i=1}^n \Var_i[f'](x) \\
& = \prod_ik_i \cdot M^2/4 + \I[f'] \cdot (k_j-1) \cdot \prod_{i\neq j}k_i
\end{align*}
and 
\begin{equation*}
\I[f] \leq \frac{M^2}{4} + \I[f'] \cdot \frac{k_j-1}{k_j} \leq \frac{M^2}{4} + \kappa \cdot \I[f'].
\end{equation*}
To conclude the proof, it suffices to observe that $\I[f']\leq M^2/(4(1-\kappa))$ implies $\I[f]\leq M^2/(4(1-\kappa))$ because
\begin{align*}
\I[f] &\leq \frac{M^2}{4} + \kappa \cdot \I[f'] \\
&\leq \frac{M^2}{4} + \kappa \cdot \frac{M^2}{4(1-\kappa)} \\
&= \frac{M^2}{4} \left(1 + \frac{\kappa}{1-\kappa}\right) \\
&\leq M^2/(4(1-\kappa)).
\end{align*}
\qed
\end{proof}

In the Boolean case, $\kappa=1/2$ and $M=1$, so that the upper bound $M^2/(4(1-\kappa))$ equals $1/2$ and the above result is a generalization of the result in \cite{Lau13}.

The above proof is also significantly simpler than the one in \cite{Lau13}. It can be easily checked that in the Boolean case, our argument on variance essentially amounts to compute the fraction of edges in the Hamming cube which are boundary edges.

\section{Concluding remarks}

\label{concl}

In the context of modelling biological systems, the choice of a relevant function with respect to the biological application is a critical and difficult step in the modelling process, notably because of the large (exponential in $n$) amount of functions compatible with the state transition graph. A current challenge is thus to find good selection criteria. We have already mentioned that canalizing functions are significantly predominant in gene network modelling \cite{Sub22}, and that networks with NC rules are stable \cite{Kau04}. While many works focus on the NC property for Boolean functions \cite{Kau04,Sub22}, we did not find any study of the canalizing properties for biological multivalued functions in the literature.

We have mentioned that in the process of modelling gene networks, canalization has been used as a guide to select candidate Boolean functions to parameterise the regulatory graph \cite{HJ12,Zho16}. With the extensions proposed in this paper, such an approach could be considered for multivalued functions as well. Moreover, the simple example of phage lambda modelling (Sections \ref{sec:phage} and \ref{sec:phage2}) suggests that it would be worth taking into account, in the sensitivity analysis, the updating rules and the notions of both NC and WNC functions.


On the theoretical side, an obvious question is whether the bound $M^2/(4(1-\kappa))$ in Theorem \ref{mainth} can be improved for multivalued WNC, or at least NC, functions, along the lines of \cite{Klo13}.

\bibliographystyle{plain}

\begin{thebibliography}{10}
%
\bibitem{Dan21}
A. Danchin.
\newblock Biological innovation in the functional landscape of a model regulator, or the lactose operon repressor.
\newblock {\em Comptes Rendus Biologies}, 344(2):111--126, 2021.
%
\bibitem{DLR19}
V. Debat and A. Le Rouzic.
\newblock Canalization, a central concept in biology.
\newblock {\em Semin Cell Dev Biol.}, 88:1--3, 2019.
%
\bibitem{Fri98}
E. Friedgut.
\newblock Boolean functions with low average sensitivity depend on few coordinates.
\newblock {\em Combinatorica}, 18, 27--35, 1998.
%
\bibitem{HJ12}
F. Hinkelmann and S. Jarrah.
\newblock Inferring biologically relevant models: nested canalyzing functions.
\newblock {\em ISRN Biomathematics}, 2012(3), 2012.
%
\bibitem{Jus04}
W. Just, I. Shmulevich, and J. Konvalina.
\newblock The number and probability of canalyzing functions.
\newblock {\em Physica D: Nonlinear Phenomena}, 197(3-4):211--221, 2004.
%
%
\bibitem{KLKAL17}
C. Kadelka, Y. Li, J. Kuipers, J. O. Adeyeye, and R. Laubenbacher.
\newblock Multistate nested canalizing functions and their networks.
\newblock {\em Theoret. Comput. Sci.}, 675:1--14, 2017.
%
\bibitem{Kau93}
S. A. Kauffman.
\newblock {\em The origins of order: Self organization and selection in evolution}.
\newblock Oxford University Press, 1993.
%
\bibitem{Kau03}
S. Kauffman, C. Peterson, B. Samuelsson, and C. Troein.
\newblock Random Boolean network models and the yeast transcriptional network.
\newblock {\em Proc. Natl. Acad. Sci.}, 100(25), 2003.
%
\bibitem{Kau04}
S. Kauffman, C. Peterson, B. Samuelsson, and C. Troein.
\newblock Genetic networks with canalyzing Boolean rules are always stable.
\newblock {\em Proc. Natl. Acad. Sci.}, 101(49), 2004.
%
\bibitem{Klo13}
J. G. Klotz, R. Heckel, and S. Schober.
\newblock Bounds on the average sensitivity of nested canalizing functions.
\newblock {\em Plos One}, 8(5), 2013.
%
\bibitem{Lau13}
Y. Li, J. O. Adeyeye, D. Murrugarra, B. Aguilar, and R. Laubenbacher.
\newblock Boolean nested canalizing functions: A comprehensive analysis.
\newblock {\em Theoret. Comput. Sci.}, 481:24--36, 2013.
%
\bibitem{ML11}
D. Murrugarra and R. Laubenbacher.
\newblock Regulatory patterns in molecular interaction networks.
\newblock {\em J. Theor. Biol.}, 288:66--72, 2011.
%
\bibitem{ML12}
D. Murrugarra and R. Laubenbacher.
\newblock The number of multistate nested canalyzing functions.
\newblock {\em Physica D: Nonlinear Phenomena}, 241(10):929--938, 2012.
%
\bibitem{ODon14}
R. O'Donnell.
\newblock {\em Analysis of Boolean functions}.
\newblock Cambridge University Press, 2014.
%
\bibitem{Pod21}
P. Podlipniak.
\newblock The role of canalization and plasticity in the evolution of musical creativity.
\newblock {\em Front Neurosci.}, 15:607887, 2021.
%
\bibitem{Pta92}
M. Ptachne.
\newblock {\em A genetic switch. Phage lambda and higher organisms}.
\newblock Blackwell Science, 1992.
%
\bibitem{RR08}
É. Remy and P. Ruet.
\newblock From minimal signed circuits to the dynamics of Boolean regulatory networks.
\newblock {\em Bioinformatics}, 24:i220--i226, 2008.
%
\bibitem{RR08b}
É. Remy, P. Ruet, and D. Thieffry.
\newblock Graphic requirements for multistability and attractive cycles in a Boolean dynamical framework.
\newblock {\em Adv. Appl. Math.}, 41(3):335--350, 2008.
%
\bibitem{Rue16}
P. Ruet.
\newblock Local cycles and dynamical properties of Boolean networks.
\newblock {\em Math. Struct. Comput. Sci.}, 26(4):702--718, 2016.
%
\bibitem{Sch08}
S. Schober.
\newblock About Boolean networks with noisy inputs.
\newblock {\em Proc. Fifth International Workshop on Computational Systems Biology}, 173--176, 2008.
%
\bibitem{Sub22}
A. Subbaroyan, O. C. Martin, and A. Samal.
\newblock Minimum complexity drives regulatory logic in Boolean models of living systems.
\newblock {\em PNAS Nexus}, 1:1--12, 2022.
%
\bibitem{Sur08}
S. Surkova, D. Kosman, K. Kozlov, Manu, E. Myasnikova, A. A. Samsonova, A. Spirov, C. E. Vanario-Alonso, M. Samsonova, and J. Reinitz.
\newblock Characterization of the Drosophila segment determination morphome.
\newblock {\em Dev Biol.}, 313(2):844--862, 2008.
%
\bibitem{TT95}
D. Thieffry and R. Thomas.
\newblock Dynamical behaviour of biological regulatory networks II. Immunity control in bacteriophage lambda.
\newblock {\em Bull. Math. Biol.}, 57:277--295, 1995.
%
\bibitem{Tho73}
R. Thomas.
\newblock Boolean formalization of genetic control circuits.
\newblock {\em J. Theor. Biol.}, 42:563--585, 1973.
%
\bibitem{Tho91}
R. Thomas.
\newblock Regulatory networks seen as asynchronous automata: a logical description.
\newblock {\em J. Theor. Biol.}, 153:1--23, 1991.
%
\bibitem{Wad42}
C.H. Waddington. 
\newblock Canalization of development and the inheritance of acquired characters. 
\newblock {\em Nature}, 150:563–565, 1942.
%
\bibitem{Zho16}
J. X. Zhou, A. Samal, A. F. d'Hérouël, N. D. Price, and S. Huang.
\newblock Relative stability of network states in Boolean network models of gene regulation in development.
\newblock {\em Biosystems}, 142-143:15--24, 2016.
\end{thebibliography}

\end{document}